\newtheorem{theorem}{Theorem}
\numberwithin{theorem}{section}
\newtheorem{propo}[theorem]{Proposition}
\newtheorem{obs}[theorem]{Observation}
\newtheorem{mylemma}[theorem]{Lemma}
\theoremstyle{definition}
\newtheorem{definition}[theorem]{Definition}
\theoremstyle{remark}
\newtheorem{remark}[theorem]{Remark}
\numberwithin{equation}{section}
\DeclareMathOperator{\ann}{Ann}
\DeclareMathOperator{\Inv}{Inv}
\DeclareMathOperator{\Hu}{Hu}
\def\CC{\mathbb{C}}
\def\PP{\mathbb{P}}
\def\NN{\mathbb{N}}
\def\GG{\mathbb{G}}
\def\fC{\mathcal{C}}
\def\fH{\mathcal{H}}
\title{The Hessian Discriminant}
\keywords{Cubic surfaces, Hurwitz form, Hessian discriminant, Salmon invariants}
\author{Rodica Dinu}
\address{%
	University of Bucharest\\
	\email{rdinu@fmi.unibuc.ro}
}
\author{Tim Seynnaeve}
\address{%
	Max Planck Institute for Mathematics in the Sciences\\
	\email{tim.seynnaeve@mis.mpg.de}
}
\date{2019/09/14}
\begin{document}

\maketitle

\begin{abstract}
	We express the Hessian discriminant of a cubic surface in terms of fundamental invariants. This answers Question 15 from the \emph{27 questions on the cubic surface}. We also explain how to compute the fundamental invariants for smooth cubics of rank 6. 
\end{abstract}
\section*{Introduction}
The \emph{Hessian discriminant} \cite{Seigal} is a locus of cubic surfaces whose complex rank jumps from $5$ to $6$. It is defined as the vanishing set of a homogeneous degree $120$ polynomial $HD$ in $20$ variables,
which is a specialization of the Hurwitz form of the variety of rank $2$ matrices in $\PP(S^2(\CC^4))$.

By construction, the Hessian discriminant is a hypersurface in $\PP^{19}$ which is invariant under $PGL(3)$. In other words, $HD$ is an invariant of cubic surfaces. 
The generators of the invariant ring of cubic surfaces are known \cite{Salmon}, so it is natural to ask how to express $HD$ in terms of these fundamental invariants. This was Question 15 in the \emph{27 questions on the cubic surface} \cite{27questions}. The main result of this article provides an answer to this question:
 \begin{theorem} \label{thm:main}
	$HD=I_{40}^3$, where $I_{40}$ is the degree 40 Salmon invariant. 
\end{theorem}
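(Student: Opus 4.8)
The plan is to turn the identity into a finite piece of invariant theory and then verify it on an explicit family.

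\textbf{Step 1 (reduction).} Since $HD$ is an invariant of cubic surfaces of degree $120$, it lies in the Salmon invariant ring. Classically this ring is a free module of rank two over the polynomial subring $\CC[I_8,I_{16},I_{24},I_{32},I_{40}]$, with basis $1$ and the degree-$100$ invariant $I_{100}$ (which satisfies a single quadratic relation $I_{100}^2=F(I_8,\dots,I_{40})$). As there is no invariant of degree $20$ and $\deg I_{100}^2=200>120$, the generator $I_{100}$ cannot appear in degree $120$; hence the degree-$120$ piece of the invariant ring has as a basis the $84$ monomials $I_8^aI_{16}^bI_{24}^cI_{32}^dI_{40}^e$ with $a+2b+3c+4d+5e=15$. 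Thus $HD=\sum c_{abcde}\,I_8^aI_{16}^bI_{24}^cI_{32}^dI_{40}^e$ for unique $c_{abcde}\in\CC$, and everything reduces to showing that the only nonzero coefficient is that of $I_{40}^3$, and that it equals $1$.

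\textbf{Step 2 (a test family).} To determine the $c_{abcde}$ I would restrict to the Sylvester pentahedral family $f=\sum_{i=1}^5 a_i X_i^3$ with $\sum_{i=1}^5 X_i=0$. Because a generic cubic surface is projectively equivalent to such an $f$, a nonzero invariant cannot vanish identically on this family; the restriction map from degree-$120$ invariants to symmetric functions of $(a_1,\dots,a_5)$ is therefore injective, and it suffices to prove the identity after restriction. On this family each $I_d$ specializes to an explicit symmetric polynomial of degree $d$ in the $a_i$ — the Clebsch–Salmon formulas — so the right-hand side $I_{40}^3$ becomes a concrete symmetric polynomial of degree $120$.

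\textbf{Step 3 (computing $HD$).} It remains to specialize $HD$ itself, and here the pentahedral form is well adapted: $\Hess(f)=6\sum_{i=1}^5 a_iX_i\,v_iv_i^{\mathsf T}$ is a weighted sum of rank-one symmetric matrices, where $v_i$ is the coefficient vector of $X_i$. The Hurwitz-form construction defining $HD$ — the obstruction to the jump of the rank $\wrk$ from $5$ to $6$ — can then be written as an explicit symmetric expression in the $a_i$, and I expect it to coincide with the cube of the expression obtained for $I_{40}$. Equivalently, one may argue geometrically: $V(HD)$ is, set-theoretically, the irreducible hypersurface cut out by $I_{40}$, so $I_{40}\mid HD$; since $\deg HD=120=3\cdot\deg I_{40}$, comparing multiplicities along this divisor forces $HD=c\,I_{40}^3$, and a single evaluation fixes $c=1$.

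\textbf{Main obstacle.} The crux is producing usable closed forms for both sides, and in particular evaluating the fundamental invariants at \emph{smooth cubics of rank $6$}: these lie on the Hessian discriminant, do not admit a pentahedral form with five distinct terms, and so fall outside the classical Clebsch–Salmon formulas. Handling this case — both to check the vanishing $HD=I_{40}^3=0$ on the rank-$6$ locus and to validate the computation there — is the step I expect to require genuinely new input, and it is exactly the computation advertised in the abstract.
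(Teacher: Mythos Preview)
Your geometric route at the end of Step~3 is exactly the paper's argument: show $V(HD)=V(I_{40})$ as sets, then use $\deg HD=3\deg I_{40}$ and the irreducibility of $I_{40}$. You correctly identify one half of what is needed---that smooth rank-$6$ cubics lie in both $V(HD)$ and $V(I_{40})$---and the paper establishes $I_{40}=0$ there precisely as you anticipate, by writing a rank-$6$ cubic as a limit of Sylvester cubics and passing to the limit in the explicit formulas $I_8=\sigma_4^2-4\sigma_3\sigma_5,\dots,I_{40}=\sigma_5^8$. What you do not address is the reverse inclusion $V(HD)\subseteq V(I_{40})$, and this is not automatic: a priori $V(HD)$ could contain another $SL(4)$-invariant divisor, the obvious candidate being the discriminant of singular cubics. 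The paper rules this out by checking the three codimension-$\le 1$ strata of the normal-form classification separately, and in particular exhibits an explicit singular cubic (the Cayley cubic) whose $H(f)\cap X_2$ consists of $10$ reduced points. Without this step your argument only gives $I_{40}\mid HD$, not $HD=c\,I_{40}^3$.

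Your restriction route in Steps~1--2 is a genuinely different idea and can in fact be pushed through more cheaply than the paper's proof. On the Sylvester family $I_{40}^3$ restricts to $\sigma_5^{24}=(c_0\cdots c_4)^{24}$, so it would suffice to show that $HD|_{\mathrm{Sylv}}$ vanishes exactly on $\{c_0\cdots c_4=0\}$: it is nonzero when all $c_i\neq 0$ (the paper's Proposition on Sylvester cubics, which does not use smoothness), and it vanishes when one $c_i=0$ since a sum of four cubes has $H(f)\cap X_2$ equal to a union of six lines. Symmetry and degree then force $HD|_{\mathrm{Sylv}}=c\cdot\sigma_5^{24}$, whence $HD=c\,I_{40}^3$ by your injectivity-of-restriction observation. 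This route needs neither the rank-$6$ invariant computation nor the singular-cubic check, so your ``main obstacle'' would become moot---but you do not carry this out; you only say you ``expect'' the expressions to agree, which presupposes an explicit Hurwitz form that is not available. Finally, fixing $c=1$ by a single evaluation is not really feasible for the same reason, and in any case $I_{40}$ is only defined up to scalar; the paper does not address the constant either.
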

The organization of the article is as follows:

In Section \ref{sec:hd}, we review the definition of the Hurwitz form and the Hessian discriminant. We also explain how to use software to verify whether a given cubic lies on the Hessian discriminant, and explain a connection with apolar schemes.

In Section \ref{sec:normalForms}, we use the classical theory of normal forms for cubic surfaces to decide for every cubic surface outside of a certain codimension 2 locus whether or not it lies on the Hessian discriminant.

In Section \ref{sec:fundinv}, we recall the invariant theory of cubic surfaces, and give a computational proof that the vanishing locus of the invariant $I_{40}$ consists of the smooth rank 6 cubic surfaces.

Finally, in Section \ref{sec:mainproof} we put together the results of the preceding two sections to prove Theorem \ref{thm:main}.
\subsection*{Acknowledgements}
The authors would like to thank Bernd Sturmfels for creating the list of 27 questions on the cubic surface.
They would also like to thank Anna Seigal and Kristian Ranestad for helpful comments and discussions.
\section{The Hessian discriminant} \label{sec:hd}
\subsection{The Hurwitz form}
Let $X$ be an irreducible variety in projective space $\PP^n$ of codimension $d\geq 1$ and degree $p \geq 2$. Let $\GG(d,\PP^n)$ denote the Grassmannian of dimension $d$ subspaces of $\PP^n$. Following  \cite{sturmfelsHurwitz}, we define $\mathcal{H}_X \subset \GG(d,\PP^n)$ to be the set of all subspaces $L$ for which $L \cap X$ does not consist of $p$ reduced points. 

If $L$ is the row space of a matrix $B=(b_{i,j})_{0\leq i\leq d, 0\leq j \leq n}$, then the entries $b_{i,j}$ are the {\it Stiefel coordinates} of $L$, and the maximal minors of $B$ are the {\it Pl\"ucker coordinates}.

One can obtain the {\it sectional genus} of $X$ by intersecting the variety with a general subspace of codimension $d-1$ and then taking the arithmetic genus of the obtained curve.
\begin{theorem}\cite[Theorem 1.1]{sturmfelsHurwitz}\label{bernd}
	$\mathcal{H}_X$ is an irreducible hypersurface in $\GG(d,\PP^n)$, defined by an irreducible element $\Hu_X$ in the coordinate ring of $\GG(d,\PP^n)$. If $X$ is regular in codimension 1, then the degree of $\Hu_X$ in Pl\"ucker coordinates equals $2p + 2g - 2$, where $g$ is the sectional genus of $X$.
\end{theorem}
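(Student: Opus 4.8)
The plan is to realize $\mathcal{H}_X$ as the image of an incidence correspondence, deduce irreducibility and the hypersurface property from a dimension count, and then compute the degree by restricting to a general line of the Grassmannian, which converts into a Riemann--Hurwitz count on a curve section. First I would set up the incidence variety
\[
\Phi = \overline{\{(x,L) \in X_{\mathrm{sm}} \times \GG(d,\PP^n) : x \in L,\ \dim(L \cap T_x X) \geq 1\}},
\]
the closure of the pairs for which the $d$-plane $L$ passes through a smooth point $x$ and is tangent to $X$ there (equivalently, $L\cap X$ fails to be reduced at $x$), with projections $\pi_1 : \Phi \to X$ and $\pi_2 : \Phi \to \GG(d,\PP^n)$. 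Over a smooth point $x$ the fibre $\pi_1^{-1}(x)$ is the locus of $d$-planes through $x$ meeting the $(n-d)$-plane $T_x X$ in positive dimension; projecting from $x$, this is the Schubert condition that a $\PP^{d-1}$ and a $\PP^{n-d-1}$ inside $\PP^{n-1}$ intersect, an irreducible subvariety of codimension $1$ in the Grassmannian of $d$-planes through $x$. This condition is $PGL$-equivariant, so the fibres are irreducible of constant dimension and $\Phi$ is irreducible with
\[
\dim \Phi = (n-d) + \bigl(d(n-d)-1\bigr) = (d+1)(n-d) - 1 = \dim \GG(d,\PP^n) - 1.
\]

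Next I would check that $\pi_2$ is generically finite onto its image: a general tangent plane is tangent to $X$ at a single point, so $\pi_2$ is generically one-to-one. Hence $\mathcal{H}_X = \overline{\pi_2(\Phi)}$ is irreducible of dimension $\dim \GG(d,\PP^n) - 1$, i.e. an irreducible hypersurface. Since the homogeneous coordinate ring of the Grassmannian in the Pl\"ucker embedding is a unique factorization domain, an irreducible hypersurface is cut out by a single irreducible element $\Hu_X$, unique up to scalar, which settles the first assertion.

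For the degree I would intersect $\mathcal{H}_X$ with a general line of $\GG(d,\PP^n)$, namely a pencil $\{L : A \subset L \subset B\}$ with $\dim A = d-1$ and $\dim B = d+1$ fixed; then $\deg \Hu_X$ equals the number of members of this pencil lying on $\mathcal{H}_X$. This is where regularity of $X$ in codimension $1$ enters: by Bertini the curve $C := B \cap X$ is smooth and irreducible of degree $p$ and of (arithmetic $=$ geometric) genus equal to the sectional genus $g$. For general $A$ the curve $C$ avoids the codimension-$2$ center $A\subset B$, each $L$ in the pencil is a hyperplane of $B$ through $A$, and $L \cap X = L \cap C$. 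Thus the pencil realizes the projection $\phi : C \to \PP^1$ from $A$, a degree-$p$ morphism whose branch points are exactly the members $L$ tangent to $C$, that is, the points of $\mathcal{H}_X$ on the line. For a general center the ramification is simple, so Riemann--Hurwitz gives $2g - 2 = -2p + \#\{\text{branch points}\}$, whence $\deg \Hu_X = 2p + 2g - 2$.

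I expect the main obstacle to be the bookkeeping in this last step: verifying that the scheme-theoretic intersection of $\mathcal{H}_X$ with the general Pl\"ucker line is reduced and supported precisely on the simple branch points, so that the Riemann--Hurwitz count returns the honest degree of $\Hu_X$ rather than a multiple, and confirming that the genericity requirements on the pencil (smoothness of $C$, $A \cap C = \emptyset$, and simplicity of the ramification of $\phi$) can be met simultaneously. This is exactly the place where the codimension-$1$ regularity hypothesis is indispensable, since without it $C$ could acquire singularities and the numerical count would have to be corrected.
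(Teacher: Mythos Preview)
The paper does not give its own proof of this statement: it is quoted verbatim as \cite[Theorem 1.1]{sturmfelsHurwitz} and used as a black box, so there is no argument in the paper to compare against. Your outline is in fact the standard proof one finds in Sturmfels' paper and its antecedents (incidence correspondence for irreducibility and the hypersurface property, then a pencil/Riemann--Hurwitz count for the degree), so you are not diverging from the original source either.

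Your sketch is sound. Two small points worth tightening, both of which you already flag. First, the assertion that $\pi_2$ is generically one-to-one deserves a sentence: it follows because a generic $d$-plane tangent to $X$ at a smooth point $x$ meets $T_xX$ in exactly a line and is transverse to $X$ elsewhere, which one can see by a further dimension count inside $\Phi$ or simply by exhibiting one such $L$. Second, the reducedness of the intersection of $\mathcal{H}_X$ with a general Pl\"ucker line is exactly the statement that, for general $A\subset B$, the projection $C\to\PP^1$ has only simple ramification; this is a standard Bertini/monodromy argument, and the regularity-in-codimension-$1$ hypothesis is used precisely to guarantee that the general curve section $C=B\cap X$ is smooth so that Riemann--Hurwitz applies without correction terms. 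With those two points filled in, the argument is complete.
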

The polynomial $\Hu_X$ defined above is called the \emph{Hurwitz form} of $X$. 
Interesting examples of Hurwitz forms in computational algebraic geometry can be consulted in \cite{sturmfelsHurwitz}.

To define the Hessian discriminant, we will need to consider the Hurwitz form of the variety $X_2$ of rank $2$ symmetric $4 \times 4$ matrices. If we write $\PP^{9}$ for the space of all symmetric $4 \times 4$ matrices, then $X_2 \subset \PP^9$ is an irreducible subvariety defined by the vanishing of the $3\times 3$ minors. It has dimension 6, degree 10, and sectional genus 6. By Theorem \ref{bernd}, the Hurwitz form $\Hu_{X_2}$ is an irreducible hypersurface of degree 30 in the Pl\"ucker coordinates of $\GG(3, \PP^9)$.  
In \cite{sturmfelsHurwitz}, there is an algorithm to compute the polynomial $\Hu_X$, but it does not finish in a reasonable amount of time in this case.

\subsection{The Hessian discriminant}
For the rest of the paper, we fix a 4-dimensional $\CC$-vector space $V$. Let $\fC=V(f)$ be a cubic surface in $\PP^3=\PP(V)$, defined by a quaternary cubic
\[
f = \sum_{0 \leq i \leq j \leq k \leq 3}{c_{ijk}x_ix_jx_k} \in \CC[x_0,x_1,x_2,x_3]_3 = S^3(V^*).
\]
If $\fC$ is not a cone over a plane cubic, we can associate to $f$ a 3-plane $H(f)$ in the space $\PP^9 = \PP(S^2(V^*))$ of symmetric $4 \times 4$ matrices. The points of $H(f)$ are called \emph{polar quadrics} of $f$. There are several equivalent ways to define $H(f)$. We leave it to the reader to check that they are indeed equivalent.
\begin{itemize}
	\item The Hessian of $f$ is the $4 \times 4$ matrix of linear forms whose $(i,j)$'th entry is $\frac{\partial^2 f}{\partial x_i \partial x_j}$. It defines a linear map $\tilde{f}: \PP^3 \to \PP^9$, sending a point $p=[x_0:x_1:x_2:x_3]$ to the Hessian matrix evaluated in that point. We define $H(f)$ to be the image of $\tilde{f}$.
	\item We can also define $H(f)$ as the linear span of the four partial derivatives $\frac{\partial f}{\partial x_0}$,$\frac{\partial f}{\partial x_1}$, $\frac{\partial f}{\partial x_2}$, $\frac{\partial f}{\partial x_3}$, seen as points in $\PP(S^2(V^*))$. Note that these 4 points are well-defined and not coplanar, unless after change of coordinates $f$ is a polynomial in $3$ variables. This explains our assumption that $\fC$ is not a cone over a plane cubic.
	\item We can view $f$ as a symmetric three-way tensor $T=(T_{ijk})_{i,j,k}$. (I.e.\ $c_{ijk}=\lambda T_{ijk}$, where $\lambda$ is the number of distinct permutations of $i,j,k$. Then $f=\sum_{i,j,k}T_{ijk}x_ix_jx_k$.) For $m \in \{0,1,2,3\}$, the $m$'th slice of $T$ is defined to be the symmetric matrix obtained by fixing the first index to be $m$. Then $H(f)$ is the linear span of the four slices of $T$. From this description we see immediately the Stiefel coordinates of $H(f)$: they are the entries of a $4 \times 10$ matrix with colums indexed by pairs $(j,k)$ with $j < k$, whose $i,(j,k)$'th entry is $T_{ijk}$.
\end{itemize}
 Now we can take the Hurwitz polynomial $\Hu_{X_2}$ from the previous subsection, and evaluate it in the Pl\"ucker coordinates of $H(f)$, where $f$ is a general cubic surface. The result is a degree 120 polynomial in the 20 variables $c_{ijk}$, called the \emph{Hessian discriminant}  $HD$. 
 By construction, the Hessian discriminant vanishes at $f \in \PP^{19}$ if and only if $H(f)$ does not intersect the variety of rank 2 matrices in 10 reduced points.
 Clearly, $V(HD)$ is invariant under linear changes of coordinates. 
 It follows that $HD$ is invariant under the natural action of $SL(4)$ on $\CC[c_{000},\ldots,c_{333}]$. 
  
  We close this subsection with some interesting observations about the Hessian discriminant. 
  The determinant of the Hessian of $f$ defines a quartic surface in $\PP^3$, called the \emph{Hessian surface} of $f$.
 It can be identified with the intersection of $H(f)$ and the variety $X_3$ of singular $4 \times 4$ matrices. Since the singular locus of $X_3$ is equal to $X_2$, rank 2 matrices give rise to singular points of the Hessian surface. As shown in \cite{Dardanelli}, when $V(f)$ is smooth, these are the only singularities, but if $V(f)$ is singular, its Hessian surface has additional singularities. Thus we have shown the following:
  \begin{obs}
  	A smooth cubic surface does not lie on the Hessian discriminant if and only if its Hessian surface has precisely 10 singular points.
  \end{obs}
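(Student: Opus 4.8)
The plan is to reduce the statement to one about the intersection scheme $H(f) \cap X_2$ inside $\PP^9$, using the two facts assembled just above: the Hessian surface of $f$ is the intersection $H(f) \cap X_3$, and---by \cite{Dardanelli}---for smooth $f$ the only singular points of this surface are the ones forced by the singular locus $\operatorname{Sing}(X_3) = X_2$, namely the points of $H(f) \cap X_2$. Thus, set-theoretically, the singular points of the Hessian surface of a smooth $f$ are exactly the points of $H(f) \cap X_2$, and the whole observation becomes a comparison between this finite set of points and the scheme-theoretic condition detected by the Hurwitz form.

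First I would fix the numerics. In $\PP^9$ the variety $X_2$ has dimension $6$ and degree $10$, while $H(f)$ is a $3$-plane; since $3 + 6 = 9$, for smooth $f$ the intersection $H(f) \cap X_2$ is $0$-dimensional (the Hessian quartic of a smooth cubic has only isolated singularities, so a positive-dimensional component is excluded), hence proper. By B\'ezout it therefore has length exactly $10$, counted with multiplicity. Next I would unwind the definition of $HD$ recalled in Section \ref{sec:hd}: by construction $f \notin V(HD)$ holds precisely when $H(f)$ meets $X_2$ in $10$ \emph{reduced} points, that is, when the length-$10$ scheme $H(f) \cap X_2$ breaks up into $10$ distinct simple points.

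The concluding step chains these together. By the identification of the singular locus above, the Hessian surface of a smooth $f$ has precisely $10$ singular points if and only if the set underlying $H(f) \cap X_2$ has $10$ elements; and since the total length of this scheme is $10$, having $10$ distinct points is equivalent to every intersection multiplicity being $1$, i.e.\ to the scheme being reduced. Therefore ``precisely $10$ singular points'' is the same condition as ``$10$ reduced intersection points,'' which in turn is exactly $f \notin V(HD)$, giving the desired equivalence.

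The one genuinely delicate point is this final translation between a set-theoretic count of singular points and the scheme-theoretic reducedness built into the Hurwitz form. Making it airtight requires both that the intersection be proper of the expected length $10$---so that $10$ distinct points forces reducedness and no singular point is overlooked---and that the cited theorem really excludes any singularity of the Hessian surface lying off $X_2$. The remaining manipulations are pure bookkeeping, since the substantive geometric input, the description of the singular locus of the Hessian for a smooth cubic, is imported from \cite{Dardanelli}.
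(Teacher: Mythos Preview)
Your argument is correct and follows exactly the route the paper takes: identify the Hessian surface with $H(f)\cap X_3$, invoke \cite{Dardanelli} to equate its singular locus (for smooth $f$) with $H(f)\cap X_2$, and then compare this with the defining condition of $HD$. The paper presents this reasoning in the paragraph immediately preceding the observation and leaves the final bookkeeping implicit; your B\'ezout step (length $10$, hence ``$10$ distinct points'' $\Leftrightarrow$ ``reduced'') makes explicit what the paper takes for granted.
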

  
 Recall that the \emph{Waring rank} of a degree $d$ polynomial $f$ is the smallest $r$ such that $f$ can be expressed as a sum of $r$ $d$'th powers of linear forms. The following observation connects the Waring rank of cubic forms with the Hessian discriminant:
 \begin{obs} \label{obs:SeigalRank6} ({See \cite{Seigal}.})
 	If $f$ has Waring rank at least 6 and defines a smooth cubic surface, then $f$ lies on the Hessian discriminant.
 \end{obs}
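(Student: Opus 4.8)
The plan is to prove the contrapositive: if $\fC=V(f)$ is smooth and does \emph{not} lie on the Hessian discriminant, then the Waring rank of $f$ is at most $5$. Since $f$ is smooth it is in particular not a cone, so $H(f)$ is a genuine $3$-plane and the characterization recalled above applies: $f\notin V(HD)$ means precisely that $H(f)\cap X_2$ is a \emph{reduced} scheme of $10$ points, equivalently (by the preceding Observation) that the Hessian surface of $f$ has exactly $10$ nodes. I would first record that each of these $10$ points has rank \emph{exactly} $2$: a rank-$1$ quadric is a point of the Veronese variety $X_1=\mathrm{Sing}(X_2)$, and a linear space meeting $X_2$ at a singular point of $X_2$ does so with intersection multiplicity $\geq 2$, which would violate reducedness. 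Hence all $10$ points of $H(f)\cap X_2$ are honest rank-$2$ quadrics.

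The heart of the argument is then to reconstruct a pentahedral decomposition $f=\sum_{i=1}^{5}\ell_i^{3}$ from these $10$ nodes. Each node is a rank-$2$ symmetric matrix, i.e.\ a quadric of the form $\alpha\,\ell^{2}+\beta\,m^{2}$ whose image is a two-plane $\langle \ell,m\rangle\subseteq V^{*}$. I would show that the $10$ resulting two-planes are exactly the $\binom{5}{2}$ planes spanned by pairs chosen from five linear forms $\ell_1,\dots,\ell_5$: two nodes share a common line in $V^{*}$ precisely when their index pairs meet (two disjoint pairs exhaust four of the five indices, so the corresponding two-planes of $V^{*}$ meet only at the origin). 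Intersecting the two-planes according to this incidence pattern recovers the five lines $\langle\ell_i\rangle$, and hence the forms $\ell_i$ up to scalar; equivalently, the $10$ nodes are the vertices of a Sylvester pentahedron whose five faces are the $\ell_i$.

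It then remains to verify that $f$ is genuinely a combination $\sum_i\lambda_i\ell_i^{3}$ of the cubes of the reconstructed forms, and not merely some cubic sharing the same Hessian nodes. I would establish this by apolarity: once the five points dual to the $\ell_i$ are in hand, one checks that their ideal is contained in the apolar ideal $f^{\perp}$, which for a cubic reduces to the degree-$2$ statement that the $6$-dimensional space $(f^{\perp})_2$ of apolar quadrics contains the net of quadrics through the five points. The Apolarity Lemma then produces the decomposition, giving $\rk(f)\le 5$, which is exactly the contrapositive we wanted.

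The main obstacle is the reconstruction step of the second paragraph: one must prove that $10$ reduced rank-$2$ nodes \emph{must} arrange themselves in the pentahedral incidence pattern, ruling out degenerate configurations, and confirm that the recovered forms actually reproduce $f$. This is precisely the classical Sylvester--Clebsch theory of pentahedral forms, and making it rigorous (together with the apolarity verification and the use of the non-cone hypothesis) is the technical core of the proof. I expect this to be handled cleanly by the normal-form analysis of Section~\ref{sec:normalForms}, and it ultimately rests on the structural results recalled in \cite{Seigal}.
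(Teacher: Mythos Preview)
Your approach is genuinely different from the paper's. The paper argues the forward implication directly: by a classification due to Segre \cite{segre}, every smooth cubic of Waring rank at least $6$ can be brought, after a linear change of coordinates, into one of two explicit normal forms --- the form (\ref{rank6}) or the form $\mu_0x_0^3+x_1^3+x_2^3-3x_0(\mu_1x_0x_1+x_0x_2+\mu_2x_3^2)$. For each of these one-parameter families the paper computes $H(f)\cap X_2$ (essentially by machine): in the first case it consists of $4$ simple and $3$ double points, in the second of $3$ triple points and $1$ simple point. Neither is a reduced scheme of $10$ points, so both families lie on the Hessian discriminant. This is short and computational, at the cost of importing Segre's classification as a black box.

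Your contrapositive route --- from $10$ reduced rank-$2$ polar quadrics, reconstruct a pentahedral decomposition --- is the classical geometric picture and is correct in principle. But the step you yourself flag as ``the main obstacle'' is a genuine gap as the proposal stands: you must actually \emph{prove} that the $10$ rank-$2$ quadrics arrange themselves into the $\binom{5}{2}$ incidence pattern and that the recovered forms satisfy $I(\{\ell_1,\dots,\ell_5\})_2\subseteq \ann(f)_2$. Section~\ref{sec:normalForms} will not do this for you, contrary to your expectation: Proposition~\ref{prop: SylvHu} and Remark~\ref{rmk: generalCubic10pts} go only in the easy direction (pentahedral $\Rightarrow$ $10$ reduced points), and nothing in that section recovers the pentahedron from the nodes. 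And appealing to \cite{Seigal} for the missing structure is circular, since the Observation is exactly the statement being attributed to that reference. The reconstruction you want is essentially the classical Sylvester pentahedron theorem (see e.g.\ \cite{Dolgachev}); it can be carried out, but it is a separate argument that your proposal sketches rather than supplies.
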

It will be easy to verify this, once we have recalled the normal forms of smooth cubic surfaces. 

\subsection{Computational methods}
While computing an expression for the Hessian discriminant is a computationally difficult task, it is easy to verify for a given cubic whether or not the Hessian discriminant vanishes at that cubic: one simply needs to compute the ideal defining the intersection of $H(f) \cap X_2$, and check whether or not it is zero-dimensional and radical. Some code in \verb|Macaulay2| \cite{M2} for computing this can be found below:
\begin{verbatim}
R=QQ[x_0..x_3,z_0..z_9]
X={x_0,x_1,x_2,x_3};
A=genericSymmetricMatrix(R,z_0,4)
I2=minors(3,A)
hessRank2 = f ->(
    hess = diff(transpose matrix{X},diff(matrix{X},f));
    I=eliminate(X,ideal(flatten entries (A-hess)));
    return (I+I2);
)
isOnHessianDiscriminant = f ->(
    J=hessRank2(f);
    return not ((codim J==9) and (J==radical J));
)
--Examples:
f=x_0*x_1*x_2+x_0*x_1*x_3+x_0*x_2*x_3+x_1*x_2*x_3
isOnHessianDiscriminant(f)
--false
f=x_0^3+x_1^3+x_2^3+x_3^2*(3*x_0+3*x_1+3*x_2+x_3)
isOnHessianDiscriminant(f)
--true
\end{verbatim}
\begin{remark} \label{rmk:algo}
 The above algorithm can also be used to simultaneously compute $H(f) \cap X$ for all $f$ in a family of cubic surfaces. For more details, see Remark \ref{rmk:algoSylv}, as well as the supplementary code available at \cite{webpageTim}.
\end{remark}
\subsection{Apolarity}
There is a beautiful connection between the 3-plane $H(f)$ associated to $f$ and apolar schemes of $f$. Although not logically necessary for the proof of our main theorem, it can provide some insight in the nature of the singularities of the Hessian surface of a smooth cubic.

We will identify the symmetric algebra $S(V)$ of $V$ with the polynomial ring $\CC[y_0,y_1,y_2,y_3]$. For every $d,m \in \NN$, there is a natural pairing
	\[
\circ: \CC[y_0,y_1,y_2,y_3]_d \times \CC[x_0,x_1,x_2,x_3]_m \to \CC[x_0,x_1,x_2,x_3]_{m-d}
\] 
 defined by $g \circ f = g(\frac{\partial}{\partial x_0},\frac{\partial}{\partial x_1},\frac{\partial}{\partial x_2},\frac{\partial}{\partial x_3})f(x_0,x_1,x_2,x_3)$. 
 
 Note that $H(f)$ can be identified with the image of the map $V \to S^2(V^*): g \mapsto g \circ f$.
\begin{definition}
	For $f$ in $\CC[x_0,x_1,x_2,x_3]$, we define the \emph{annihilator} of $f$ to be the ideal 
	\[
	\ann(f) = \{g| g \circ f = 0\} \subseteq \CC[y_0,y_1,y_2,y_3].
	\]
	If $I \subseteq \ann(f)$ is a saturated ideal, we say that $I$ is an \emph{apolar ideal} to $f$, and $V(f)$ is an \emph{apolar scheme} to $f$. In other words, $Y \subseteq \PP(V^*)$ is an apolar scheme to $f$ if every polynomial that vanishes on $Y$ also annihilates $f$.
\end{definition}
\begin{obs} \label{obs:AnniHess}
	Denote the coordinates on $\PP^9=S^2(V^*)$ by $z_{ij}$.
	Then defining equations $\sum_{i\leq j}a_{ij}z_{ij}=0$ of $H(f)$ are in one-to-one correspondence with degree 2 elements $\sum_{i\leq j}a_{ij}y_iy_j$ of $\ann(f)$:
	$\sum_{i\leq j}a_{ij}y_iy_j$ is in $\ann(f)$ if and only if $\sum_{i\leq j}a_{ij}\frac{\partial^2f}{\partial x_i \partial x_j}=0$, if and only if $\sum_{i\leq j}a_{ij}z_{ij}$ vanishes on $H(f)$. 
	As a corollary of this, if $Y$ is an apolar scheme to $f$, then $H(f)$ is contained in the linear span of $v_2(Y)$,  the image of $Y$ under the second Veronese embedding $v_2: \PP(V^*) \to \PP(S^2(V^*))$.
Indeed: every linear equation $\sum_{i\leq j}a_{ij}z_{ij}$ on $v_2(Y)$ comes from a quadratic equation $\sum_{i\leq j}a_{ij}y_iy_j$ on $Y$, which by the above also vanishes on $H(f)$.
\end{obs}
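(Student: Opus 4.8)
The observation has two parts: the bijection between defining equations of $H(f)$ and degree-$2$ elements of $\ann(f)$, and the corollary on apolar schemes. The plan is to read both off from the single remark made just above the statement, namely that $H(f)$ is the image of the polarization map $\phi\colon V \to S^2(V^*)$, $g \mapsto g\circ f$, together with the perfectness of the apolarity pairing $S^2(V)\times S^2(V^*)\to\CC$, which is just the contraction $q\circ Q$ landing in degree $0$. First I would fix coordinates: the quadric $Q=\sum_{i\le j}z_{ij}x_ix_j$ has the $z_{ij}$ as its coordinates on $\PP^9$, so a linear form $\ell=\sum_{i\le j}a_{ij}z_{ij}$ on $\PP^9$ evaluates on $Q$ by $\ell(Q)=\sum_{i\le j}a_{ij}z_{ij}$, and this agrees, up to the standard factors of $2$ relating the two conventions (which never affect a vanishing statement), with the apolarity pairing of $Q$ against the quadric $q=\sum_{i\le j}a_{ij}y_iy_j\in S^2(V)$.

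For the equivalence itself, the cleanest route uses the associativity of the contraction $\circ$: for $q\in S^2(V)$ and $g\in V$ one has $q\circ(g\circ f)=(qg)\circ f=g\circ(q\circ f)$, because $q\circ$ and $g\circ$ are commuting constant-coefficient differential operators. Hence $\ell$ vanishes on every spanning quadric $g\circ f$ of $H(f)$ iff $g\circ(q\circ f)=0$ for all linear $g$; since $q\circ f$ is a linear form and the degree-$1$ pairing $V\times S^1(V^*)\to\CC$ is perfect, this holds iff $q\circ f=0$, i.e. iff $q\in\ann(f)_2$. Concretely this is nothing but the symmetry of the third partials $\frac{\partial^3 f}{\partial x_i\partial x_j\partial x_m}$: the coefficient of $x_m$ in $\sum_{i\le j}a_{ij}\frac{\partial^2 f}{\partial x_i\partial x_j}$ matches $\ell\bigl(\frac{\partial f}{\partial x_m}\bigr)$, so $q\circ f=0$ iff $\ell$ annihilates each $\frac{\partial f}{\partial x_m}$ iff $\ell$ vanishes on $H(f)=\langle\frac{\partial f}{\partial x_0},\dots,\frac{\partial f}{\partial x_3}\rangle$. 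I would record this symmetry as the one genuine computation; everything else is bookkeeping.

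For the corollary, I would use that $v_2$ pulls the coordinate $z_{ij}$ back to $y_iy_j$, so restriction of linear forms on $\PP^9$ to $v_2(\PP(V^*))$ is the canonical isomorphism $S^2(V)\xrightarrow{\sim}S^2(V)$ onto the quadrics on $\PP(V^*)$. Thus $\ell=\sum a_{ij}z_{ij}$ vanishes on $v_2(Y)$ iff the quadric $q=\sum a_{ij}y_iy_j$ lies in $I(Y)_2$. If $Y$ is apolar to $f$ then $I(Y)\subseteq\ann(f)$, so such a $q$ lies in $\ann(f)_2$, and by the equivalence just proven $\ell$ vanishes on $H(f)$. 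Since the linear span $\langle v_2(Y)\rangle$ is cut out precisely by the linear forms vanishing on $v_2(Y)$, this shows every hyperplane through $\langle v_2(Y)\rangle$ also contains $H(f)$, i.e. $H(f)\subseteq\langle v_2(Y)\rangle$.

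The only place requiring care — and the step I expect to be the main obstacle, mild as it is — is the corollary's identification of the linear forms vanishing on $v_2(Y)$ with $I(Y)_2$: one must check that $v_2$ being a closed embedding makes restriction on degree-$2$ forms genuinely an isomorphism and that $\langle v_2(Y)\rangle$ really is the intersection of these hyperplanes, rather than argue naively on points. For reduced $Y$ this is transparent; for a non-reduced apolar scheme one invokes that $v_2^*\mathcal{O}(1)=\mathcal{O}(2)$, so that the linear forms on $\PP^9$ restricting to zero on $v_2(Y)$ are exactly the transport of $I(Y)_2$.
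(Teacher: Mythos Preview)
Your proposal is correct and follows essentially the same approach as the paper: unwind the definitions of $\ann(f)$ and $H(f)$ so that both conditions reduce to $\sum_{i\le j}a_{ij}\frac{\partial^2 f}{\partial x_i\partial x_j}=0$, and then observe that linear forms on $v_2(Y)$ pull back to quadrics in $I(Y)\subseteq\ann(f)$. Your write-up is more thorough---the associativity-of-contraction framing and the care taken with non-reduced $Y$ via $v_2^*\mathcal{O}(1)=\mathcal{O}(2)$ are points the paper leaves implicit---but the underlying argument is the same.
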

\section{Normal forms for cubics} \label{sec:normalForms}
It is possible to classify the cubic surfaces up to linear transformation, and use this to provide a list of normal forms so that every quaternary cubic can be brought in one of the normal forms by a linear change of coordinates. This was first done by Schl\"afli \cite{Schlafli}, we refer the interested reader to \cite{Schmitt} for an overview, and to \cite{segre} and \cite{BruceWall} for detailed accounts on the smooth, respectively the singular case.

For our purposes, it suffices to know the following result:
\begin{propo} \label{prop:Normalforms}
	Every cubic, outside of a certain codimension $>1$ set in $\PP^{19}$, can after a linear change of coordinates be written in one of the following 3 normal forms:
	\begin{enumerate}
		\item Sylvester's Pentahedral form:
		\begin{equation}\label{sylv}
		c_0x_0^3+c_1x_1^3+c_2x_2^3+c_3x_3^3+c_4(-x_0-x_1-x_2-x_3)^3=0,
		\end{equation}
		where $c_i \in \CC^*$, and $\sum_{i}{\pm \frac{1}{\sqrt{c_i}}} \neq 0$.
		\item Generic rank 6 cubics:
		\begin{equation}\label{rank6}
		(x_0^3 + x_1^3 + x_2^3) - x_3^2(3\lambda_0x_0 + 3\lambda_1x_1 + 3\lambda_2x_2 - x_3)=0,
		\end{equation}
		where $\lambda_i \in \CC^*$, and $2(\lambda_0^{\frac{3}{2}}+\lambda_1^{\frac{3}{2}}+\lambda_2^{\frac{3}{2}})\neq 1$.
		\item Generic singular cubics:
		\begin{equation}\label{sing}
		 x_3(x_1^2-x_0x_2)+x_1(x_0-(1+\rho_0)x_1+\rho_0x_2)(x_0-(\rho_1+\rho_2)x_1+\rho_1\rho_2x_2)=0,
		\end{equation}
		where $\rho_i \in \CC \setminus \{0,1\}$ pairwise different.
	\end{enumerate}
	The set of cubics that can be brought in the form (\ref{sylv}) is dense in $\PP^{19}$; the sets of cubics that can be brought in the form (\ref{rank6}) respectively (\ref{sing}) are both of codimension one in $\PP^{19}$.
\end{propo}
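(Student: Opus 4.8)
The plan is to stratify $\PP^{19}$ into the smooth and the singular loci, and within each to isolate the generic stratum together with the next codimension-one stratum, invoking the classical classification of cubic surfaces for the normal forms themselves and using a dimension count against $\dim PGL(4)=15$ to certify the stated codimensions. Throughout I would treat each normal form as a $PGL(4)$-orbit of a finite-parameter family and check that its stabilizer is finite, so that the dimension of the corresponding locus is exactly $15$ plus the number of essential parameters.

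First I would handle the smooth locus, which is the complement of the discriminant and hence dense open in $\PP^{19}$. By Sylvester's pentahedral theorem a general cubic surface has an essentially unique expression as a sum of five cubes of linear forms; after a projective change of coordinates the five planes may be normalized to $x_0,x_1,x_2,x_3,-x_0-x_1-x_2-x_3$, yielding form (\ref{sylv}). The stabilizer of the standard pentahedron in $PGL(4)$ is finite (four general hyperplanes force a diagonal transformation and the fifth forces it to be scalar, leaving only the finite symmetry group), so this locus has dimension $15+4=19$ and is dense. Smoothness of (\ref{sylv}) is governed by the pentahedral discriminant, whose relevant factor vanishes exactly when $\sum_i\pm\frac{1}{\sqrt{c_i}}=0$; this is the stated inequality. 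The smooth cubics not of this form are the rank $6$ cubics, which by Segre's analysis \cite{segre} can be brought to (\ref{rank6}); here the count $15+3=18$ shows this locus is of codimension one, consistently with the fact that rank $5$ is the generic Waring rank of a quaternary cubic and rank $6$ is the first jump.

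Next I would treat the singular locus, i.e.\ the discriminant, which is an irreducible hypersurface and so of codimension one. Its generic point is a cubic with a single node, and by the Bruce--Wall classification \cite{BruceWall} such a one-nodal cubic has the normal form (\ref{sing}); once more $15+3=18$ confirms codimension one, and the conditions $\rho_i\in\CC\setminus\{0,1\}$ pairwise distinct guarantee that the singularity is a single node of the generic type and that no further degeneration occurs. Everything left over---singular cubics with more than one node or with non-nodal singularities, cones over plane cubics, smooth cubics of Waring rank exceeding $6$, and the boundary loci where the displayed inequalities fail---imposes at least two independent conditions and so sits in codimension $>1$.

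The main obstacle is not the existence of the normal forms, which is classical and can be cited, but making the genericity inequalities precise and verifying that the discarded set is genuinely of codimension at least two. Concretely, one must confirm that the stated factor of the pentahedral discriminant and the constraints on the $\lambda_i$ and $\rho_i$ cut out exactly the smooth-versus-singular and one-nodal-versus-more-degenerate boundaries, so that no hidden hypersurface component is being swept into the excluded locus. This is where one leans most heavily on the detailed rank and singularity stratifications in \cite{segre} and \cite{BruceWall}, together with the bookkeeping of $PGL(4)$-orbit dimensions sketched above.
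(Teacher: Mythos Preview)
Your proposal follows essentially the same line as the paper's own ``Idea of the proof'': both reduce to citing Segre \cite{segre} for the non-Sylvester/rank-$6$ normal form and Bruce--Wall \cite{BruceWall} for the generic singular one, together with dimension counts against $\dim PGL(4)$. The paper's only organizational difference is that it stratifies first by the Sylvester trichotomy (unique, multiple, or no expression as a sum of five cubes, of respective dimensions $19$, $16$, $18$) rather than by smooth versus singular; this makes explicit the $16$-dimensional \emph{cyclic} family that your sentence ``the smooth cubics not of this form are the rank $6$ cubics'' glosses over, though since that family has codimension $>1$ your conclusion is unaffected.
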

\begin{proof}[Idea of the proof]
	We call $f \in \CC[x_0,x_1,x_2,x_3]_3$
	\begin{itemize}
		\item \emph{non-degenerate Sylvester} if $f$ can be written as a sum of 5 cubes in a unique way.
		\item \emph{cyclic} if $f$ can be written as a sum of 5 cubes in more than one way.
		\item \emph{non-Sylvester} if $f$ cannot be written as a sum of 5 cubes.
	\end{itemize}
	It can be shown (see \cite{segre}) that these are families of respective dimensions $19$, $16$ and $18$ in $\PP^{19}$, and that a general non-Sylvester cubic can be brought in the form (\ref{rank6}).
	
	A general singular cubic can be written in the form (\ref{sing}): this is proven in \cite[Lemma 2]{BruceWall}, see also \cite[Theorem 2]{Schmitt}.
\end{proof}
\subsection{Sylvester's pentahedral form}
\begin{propo} \label{prop: SylvHu}
	No cubic of the form (\ref{sylv}) lies on the Hessian discriminant, as long as all $c_i$ are nonzero.
\end{propo}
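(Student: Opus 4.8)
The plan is to reduce the statement to an explicit computation inside the $\PP^4$ predicted by the apolarity description of Observation \ref{obs:AnniHess}. Write $\ell_0=x_0$, $\ell_1=x_1$, $\ell_2=x_2$, $\ell_3=x_3$ and $\ell_4=-(x_0+x_1+x_2+x_3)$, so that $f=\sum_{i=0}^4 c_i\ell_i^3$. First I would compute $H(f)$ from its description as the span of the partial derivatives: since $\partial f/\partial x_j = 3c_j\ell_j^2 - 3c_4\ell_4^2$ for $j=0,\ldots,3$, the plane $H(f)$ is spanned by the four quadrics $c_j\ell_j^2-c_4\ell_4^2$. In particular $H(f)$ lies in the $4$-plane $P:=\langle \ell_0^2,\ldots,\ell_4^2\rangle\subseteq\PP^9$, which is exactly what Observation \ref{obs:AnniHess} predicts for the apolar scheme $\{[\ell_0],\ldots,[\ell_4]\}$. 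Because every $c_i\neq 0$ and the five squares $\ell_i^2$ are linearly independent, the four partials are independent and $H(f)$ is a genuine hyperplane of $P$ (so $f$ is in particular not a cone, and the criterion of Section \ref{sec:hd} applies). Writing a point of $P$ as $\sum_i a_i\ell_i^2$ with coordinates $[a_0:\cdots:a_4]$, a short calculation identifies this hyperplane as $\sum_{i=0}^4 a_i/c_i=0$.

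The heart of the argument is to describe $P\cap X_2$. In the coordinates $a_i$, the symmetric matrix attached to $\sum_i a_i\ell_i^2$ is $Q_a=\mathrm{diag}(a_0,a_1,a_2,a_3)+a_4 J$, where $J$ is the all-ones matrix, using $\ell_i^2=e_ie_i^{\mathsf T}$ for $i\le 3$ and $\ell_4\ell_4^{\mathsf T}=J$. I would then prove that $\rk(Q_a)\le 2$ if and only if at most two of $a_0,\ldots,a_4$ are nonzero. The substantive direction is that three or more nonzero $a_i$ force $\rk(Q_a)\ge 3$: when $a_0,\ldots,a_3$ are all nonzero this is because $Q_a$ is $\mathrm{diag}(a_0,\ldots,a_3)$ times a rank-one perturbation of the identity, and the remaining cases reduce to small explicit row reductions. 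Consequently $P\cap X_2$ is precisely the union of the $\binom 52=10$ coordinate lines $\overline{e_ie_j}$, a curve of degree $10$, in agreement with $\deg X_2=10$.

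It then remains to intersect with the hyperplane. On the line $\overline{e_ie_j}$ the equation $\sum_k a_k/c_k=0$ becomes $a_i/c_i+a_j/c_j=0$, whose unique solution is $[a_i:a_j]=[c_i:-c_j]$; as $c_i,c_j\neq 0$ this point has exactly two nonzero coordinates, so it has rank $2$ and lies away from every node $e_k$ (indeed $\sum_k a_k/c_k$ takes the nonzero value $1/c_k$ at $e_k$). Thus $H(f)$ meets the ten lines in ten distinct smooth points of $X_2$. Finally I would note that $H(f)\cap X_2$ is a proper, zero-dimensional linear section of the degree-$10$ variety $X_2$, hence a scheme of length $10$; since it contains $10$ distinct points it is reduced. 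By the criterion recalled in Section \ref{sec:hd}, $H(f)$ meets $X_2$ in $10$ reduced points, so $HD(f)\neq 0$ and $f$ does not lie on the Hessian discriminant.

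I expect the main obstacle to be the rank analysis of $Q_a$, specifically ruling out rank $2$ when three or more $a_i$ are nonzero: vanishing of the principal $3\times 3$ minors is necessary but not sufficient, as the point $a_0=\cdots=a_3$ with $a_4=-a_0/3$ already shows (there $Q_a=a_0(I-\tfrac13 J)$ has all principal $3\times 3$ minors zero yet is nonsingular). A secondary technical point is the reducedness of the ten points, for which the length-equals-degree argument above is the most economical route, though one could instead check transversality of $H(f)$ and $X_2$ at each point by hand.
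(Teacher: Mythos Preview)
Your proposal is correct and follows essentially the same approach as the paper: identify the ten rank-$2$ polar quadrics as $c_i\ell_i^2-c_j\ell_j^2$ and conclude. The paper's proof simply lists these ten points and defers the verification to a hand or computer check, while the accompanying Remark~\ref{rmk: generalCubic10pts} sketches exactly your apolarity argument (realizing $H(f)$ as a hyperplane in $\langle \ell_0^2,\ldots,\ell_4^2\rangle$ and intersecting with the ten lines $\langle \ell_i^2,\ell_j^2\rangle$); your write-up makes that sketch explicit with coordinates and adds the clean length-equals-degree argument for reducedness, which the paper omits.
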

\begin{proof}
	Write $x_4=-x_0-x_1-x_2-x_3$.
	The set of rank 2 quadratic forms in $H(f)$ is given by
	\[
	\{c_ix_i^2-c_jx_j^2|i \leq j\}.
	\]
	This can easily be verified by hand, or computationally by using the algorithm described below.
\end{proof}
\begin{remark} \label{rmk:algoSylv}
	We can use our \verb|Macaulay2| code to simultaneously analyze $H(f) \cap X_2$ for all cubics $f$ of the form (\ref{sylv}), including the ones where one of the $c_i$ is zero (if 2 or more of them are zero then $H(f)$ is not defined). The code (available at \cite{webpageTim}) computes a primary decomposition of the ideal defining $H(f) \cap X_2$ (where the $c_i$ are variables). The primary decomposition of our ideal has 40 components. 30 of these contain one of the paramaters $c_0,\dots,c_4$ (each parameter in 6 components); the other 10 do not contain any linear combination of the parameters.
	This means that if exactly one of the parameters is zero, the interction $H(f) \cap X_2$ consists of 6 lines, whereas if all the $c_i$ are nonzero, it consists of 10 points.
\end{remark}

\begin{remark} \label{rmk: generalCubic10pts}
	Propostition \ref{prop: SylvHu} can also be shown using apolarity: for a general cubic surface
	\[
	f := L_1^3+L_2^3+L_3^3+L_4^3+L_5^3 =0
	\]
	(with the $L_i$ in general position)
	there is an apolar scheme $Y=\{L_1,\ldots,L_5\}$. This can easily be verified directly, but also follows from the so-called \emph{apolarity lemma}, which states that a homogeneous degree $d$ polynomial $f$ can be written as a linear combination of powers $L_1^d , \ldots, L_s^d$ of linear forms if and only if $\{L_1,\ldots,L_s\}$ is an apolar scheme to $f$.
	It now follows from Observation \ref{obs:AnniHess} that $H(f)$ is contained in the linear span $\langle v_2(Y) \rangle$ of the second Veronese embedding of $Y$.
	
	Clearly, $\langle v_2(Y) \rangle \cap X_2$ contains the 10 lines through the $\langle L_i^2, L_j^2 \rangle$, and (using the fact that $L_i$ are in general position) it is easy to verify that this is in fact an equality. 
	Now, $H(f) \cap X_2$ is the intersection of $\langle v_2(Y) \rangle \cap X_2$ with a hyperplane $H$. Since $H(f)$ does not contain any of the $L_i^2$ (indeed: this would imply that there is a $g$ such that $g \circ L_i=0$ for 4 out 5 of the $L_i$, contradicting the general position), $H$ intersects every line $\langle L_i^2,L_j^2 \rangle$ in one point, and these points are distinct. These are the 10 points of $H(f) \cap X$.
\end{remark}
\subsection{Rank six cubics}
\begin{propo} \label{prop:rank67points}
	For a cubic $f$ of the form (\ref{rank6}), the scheme-theoretic intersection $H(f) \cap X_2$ consists of 4 simple and 3 double points. In particular, $f$ lies on the Hessian discriminant.
\end{propo}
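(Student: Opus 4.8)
The plan is to make the $3$-plane $H(f)$ completely explicit, determine the points of $H(f)\cap X_2$ set-theoretically, and then read off the scheme structure at each point by a local computation, using a global degree count to avoid analyzing every point separately. First I would compute the four partial derivatives of $f$ and record $H(f)$ as the linear span of the quadrics $Q_0=x_0^2-\lambda_0x_3^2$, $Q_1=x_1^2-\lambda_1x_3^2$, $Q_2=x_2^2-\lambda_2x_3^2$ and $Q_3=x_3^2-2x_3(\lambda_0x_0+\lambda_1x_1+\lambda_2x_2)$. Writing a general element as $Q=a_0Q_0+a_1Q_1+a_2Q_2+a_3Q_3$ and using $[a_0:a_1:a_2:a_3]$ as coordinates on $H(f)\cong\PP^3$, this identifies $H(f)\cap X_2$ with the locus in $\PP^3$ where the symmetric matrix
\[
M(a)=\begin{pmatrix} a_0 & 0 & 0 & -a_3\lambda_0 \\ 0 & a_1 & 0 & -a_3\lambda_1 \\ 0 & 0 & a_2 & -a_3\lambda_2 \\ -a_3\lambda_0 & -a_3\lambda_1 & -a_3\lambda_2 & a_3-a_0\lambda_0-a_1\lambda_1-a_2\lambda_2 \end{pmatrix}
\]
has rank at most $2$, i.e.\ where all its $3\times 3$ minors vanish.

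Next I would settle the support. Since the $\lambda_i$ are nonzero, if $a_0,a_1,a_2$ are all nonzero then the first three rows of $M(a)$ are independent, so $\rk M\geq 3$; hence at least one of $a_0,a_1,a_2$ must vanish. A short case analysis (one, two, or all three of $a_0,a_1,a_2$ equal to zero) shows that every rank-$\leq 2$ point has $a_3=0$ unless $a_0=a_1=a_2=0$, and produces exactly seven points: the three coordinate points $[1:0:0:0]$, $[0:1:0:0]$, $[0:0:1:0]$ (type B), the three points $[0:\lambda_2:-\lambda_1:0]$ obtained by permuting the roles of the indices $0,1,2$ (type A), and the point $[0:0:0:1]$ (type C).

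Now the local computation, which is the heart of the matter. At a type B point, say $[1:0:0:0]$, the entry $M_{11}$ is a unit, so locally the ideal of $3\times 3$ minors of $M$ agrees with the ideal of $2\times 2$ minors of the Schur complement $S$ of $M_{11}$; repeating with the (unit) lower-right entry of $S$ reduces to the vanishing of the $2\times 2$ Schur complement $S'$. I expect $S'$ to equal
\[
\begin{pmatrix} a_1+\tfrac{\lambda_1^2}{\lambda_0}a_3^2 & \tfrac{\lambda_1\lambda_2}{\lambda_0}a_3^2 \\[2pt] \tfrac{\lambda_1\lambda_2}{\lambda_0}a_3^2 & a_2+\tfrac{\lambda_2^2}{\lambda_0}a_3^2 \end{pmatrix}
\]
up to units and higher-order terms: the two diagonal entries let me eliminate $a_1$ and $a_2$, and the off-diagonal entry then becomes a unit times $a_3^2$. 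Hence the local ring is $\CC[[a_3]]/(a_3^2)$, of length $2$, so each type B point is a double point. For the remaining points I would invoke a degree count: $H(f)\cap X_2$ is zero-dimensional, hence a proper linear section of $X_2$, so its total length equals $\deg X_2=10$. The three type B points already contribute length $6$, so the four type A and C points contribute total length $4$; as each has length at least $1$, each is reduced. This yields exactly $4$ simple and $3$ double points, and since the intersection is not $10$ reduced points, the characterization of $HD$ recalled in Section \ref{sec:hd} shows that $f$ lies on the Hessian discriminant.

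The main obstacle is precisely this local multiplicity at the type B points: one must verify that the obstruction to rank $\leq 2$ vanishes to order \emph{exactly} $2$ in the transverse direction $a_3$, neither $1$ (which would make the point reduced) nor $\geq 3$. The Schur-complement reduction is the cleanest way to control it, since it replaces the unwieldy ideal of $3\times 3$ minors by explicit power series whose leading terms are immediate. As an alternative to the degree count, one can check that the Jacobian of three suitably chosen minors has full rank $3$ at the type A and type C points (at $[0:0:0:1]$ this amounts to the nonvanishing of a determinant proportional to $\lambda_0^2\lambda_1^2\lambda_2^2$), which shows those four points are reduced without appealing to $\deg X_2=10$.
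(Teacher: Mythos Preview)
Your proposal is correct. The partial derivatives and the matrix $M(a)$ are computed accurately, the case analysis pinning down the seven support points is sound, the Schur-complement reduction at the type~B points genuinely produces the local ideal $(a_3^2)$ (since $\lambda_1\lambda_2\neq 0$ makes the off-diagonal entry a unit times $a_3^2$), and the degree argument is valid because $X_2$ is Cohen--Macaulay and the intersection is zero-dimensional, so the total length must be $\deg X_2=10$.

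The approach, however, is genuinely different from the paper's. The paper's proof is a one-line deferral to a \texttt{Macaulay2} computation of the primary decomposition of the ideal $H(f)\cap X_2$ with the $\lambda_i$ treated as parameters. Your argument, by contrast, is entirely by hand: you parametrize $H(f)$ explicitly, locate the seven points by elementary linear algebra, and determine the multiplicity at the type~B points via iterated Schur complements, finishing with a global length count. What your approach buys is self-containment and transparency: the reader sees \emph{why} the multiplicity is exactly~$2$ (the rank-drop obstruction vanishes to order $2$ in the transverse coordinate~$a_3$), rather than being asked to trust code. What the paper's approach buys is brevity and uniformity with the other normal-form computations in Section~\ref{sec:normalForms}, and it avoids the somewhat delicate bookkeeping of higher-order terms in the Schur complement. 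Your argument also dovetails nicely with the paper's subsequent remark explaining the three double points via degenerating pentahedral forms, but arrives at the same conclusion by local algebra rather than apolarity.
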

\begin{proof}
	 The scheme $H(f) \cap X_2$ is supported at the 7 points $x_0^2-\lambda_0x_3^2$, $x_1^2-\lambda_1x_3^2$, $x_2^2-\lambda_2x_3^2$, $\lambda_1x_0^2-\lambda_0x_1^2$, $\lambda_2x_0^2-\lambda_0x_2^2$, $\lambda_2x_1^2-\lambda_1x_2^2$, $x_3(\lambda_0x_0+\lambda_1x_1+\lambda_2x_2-x_3)$, where the first three are double points. This can be verified using our code \cite{webpageTim}.
\end{proof}
\begin{remark}
	There is an intuitive explanation why $H(f) \cap X_2$ contains 3 double points. As we will see in Section \ref{subsec:rank6inv}, a cubic of the form (\ref{rank6}) can be obtained as a limit of cubics of the form $\sum_{i=1}^{5}{L_i^3}$, where in the limit the points $L_4, L_5 \in \PP^3$ crash together. In Remark \ref{rmk: generalCubic10pts} we saw that for cubics in pentahedral form, the 10 points in $H(f) \cap X_2$ are in bijection with the 10 lines between the 5 points $L_i^2 \in \PP^9$. Now if 2 of our points crash together, these 10 lines become 4 single lines and 3 double lines. We will now make this more precise.
	 
	For a general rank 6 cubic surface
	\[
	f := L_1^3+L_2^3+L_3^3+L_4^2M =0
	\]
	(with $L_1,L_2,L_3L_4,M$ in general position)
	let $Z$ be the nonreduced scheme of length 2 supported at $L_4$ in direction $M$, i.e.\ $I(Z) = I(L_4)^2 + I( \langle L_4,L_5 \rangle )$.
	Then $Y=L_1 \cup L_2 \cup L_3 \cup Z$ is a length 5 apolar scheme of $f$. Hence $H(f) \subset \langle v_2(Y) \rangle$.

	Note that $v_2(Y)=\langle L_1^2,L_2^2,L_3^2,L_4^2,L_4M\rangle$. From this we can see that $\langle v_2(Y) \rangle \cap X_2$ contains the 4 lines $\langle L_1^2,L_2^2 \rangle$, $\langle L_1^2,L_3^2 \rangle$, $\langle L_2^2,L_3^2 \rangle$ and $\langle L_4^2,L_4M \rangle$, as well as three double lines defined by $I(\langle L_i^2,L_4^2 \rangle^2) + I(\langle L_i,L_4,M \rangle^2)$. As before, from the fact that $L_i$ and $M$ are general we can see that $\langle v_2(Y) \rangle \cap X_2$ consists precisely of these lines.
	
	Now, $H(f) \cap X$ is the intersection of $\langle v_2(Y) \rangle \cap X_2$ with a hyperplane $H$. Since $H(f)$ does not contain any of the $L_i^2$, $H$ intersects every of our 7 lines in 1 (possibly fat) point. These are the 7 points of $H(f) \cap X_2$.
\end{remark}
\begin{remark}
	It is proven in \cite{segre} that \emph{every} smooth cubic surface of Waring rank at least six can either be brought in the form (\ref{rank6}), or in the following form:
	\[
    \mu_0x_0^3+x_1^3+x_2^3-3x_0(\mu_1x_0x_1+x_0x_2+\mu_2x_3^2)=0.
	\]
	For cubics of this form, we can use our code to show that $H(f) \cap X_2$ consists of 3 triple points and one single point. In particular, they lie on the Hessian discriminant, and we recover Observation \ref{obs:SeigalRank6}.
\end{remark}
\subsection{Generic singular cubics}
\begin{propo} \label{prop:singHur}
	A general singular cubic does not lie on the Hessian discriminant.
\end{propo}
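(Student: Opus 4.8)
The plan is to carry out for the singular normal form the same analysis used for Sylvester's form in Proposition \ref{prop: SylvHu} and for the rank six form in Proposition \ref{prop:rank67points}. By Proposition \ref{prop:Normalforms}, a general singular cubic can after a linear change of coordinates be written in the form (\ref{sing}); and by construction $HD$ vanishes at $f$ if and only if the scheme $H(f) \cap X_2$ fails to consist of $10$ reduced points. The proposition therefore amounts to showing that, for general parameters $\rho_0,\rho_1,\rho_2$, the intersection $H(f) \cap X_2$ is zero-dimensional, radical, and of degree $10$.

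First I would write $H(f)$ down explicitly: using the slice description of Section \ref{sec:hd}, its Stiefel coordinates are read off directly from the coefficients of the cubic in (\ref{sing}), yielding a $4 \times 10$ matrix whose rowspace is $H(f)$. Feeding this into the routine \verb|hessRank2| gives the ideal of $H(f) \cap X_2$, and \verb|isOnHessianDiscriminant| tests exactly the codimension and radicality conditions we need. To handle the three symbolic parameters at once, the natural route is the primary-decomposition strategy of Remark \ref{rmk:algoSylv}: compute the decomposition over $\CC(\rho_0,\rho_1,\rho_2)$ and check that it consists of ten distinct reduced points, with no multiple or embedded components surviving for general $\rho_i$.

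I expect the genuine difficulty to be purely computational, since the symbolic primary decomposition in three free parameters can be expensive, and I would bypass it by a semicontinuity argument. The locus of cubics not lying on $V(HD)$ is the complement of a hypersurface, hence open; the singular cubics form the discriminant hypersurface $\Delta \subset \PP^{19}$, which is \emph{irreducible} (being the dual variety of the third Veronese embedding of $\PP^3$). Hence either $\Delta \subseteq V(HD)$, or a general point of $\Delta$ lies off $V(HD)$. To rule out the first alternative it suffices to produce a single singular cubic not on the Hessian discriminant, and the Cayley cubic
\[
f_0 = x_0x_1x_2+x_0x_1x_3+x_0x_2x_3+x_1x_2x_3,
\]
singular at the four coordinate points, already does this: it is exactly the first example in the code listing of Section \ref{sec:hd}, where \verb|isOnHessianDiscriminant| returns \verb|false|, so that $H(f_0) \cap X_2$ is a reduced scheme of $10$ points. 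Therefore $\Delta \not\subseteq V(HD)$, and a general singular cubic does not lie on the Hessian discriminant. The only non-mechanical point requiring care is the irreducibility of $\Delta$, which is classical.
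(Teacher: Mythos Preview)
Your proposal is correct and takes essentially the same approach as the paper: reduce to exhibiting a single singular cubic off the Hessian discriminant (justified by irreducibility of the discriminant hypersurface), and use the Cayley cubic as that witness. The only difference is that the paper writes out the ten rank-$2$ polar quadrics of the Cayley cubic explicitly rather than appealing solely to the \texttt{isOnHessianDiscriminant} routine.
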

\begin{proof}
	It suffices to find one singular cubic that does not lie on the Hessian discriminant. One way of doing this is by generating a random one and using our code. Here we will instead exhibit a very specific example: the Cayley cubic, given by
	\[
	f=x_0x_1x_2+x_0x_1x_3+x_0x_2x_3+x_1x_2x_3.
	\]
	Then $H(f)$ is the linear span of the quadratic forms 
	\[
	x_1x_2+x_1x_3+x_2x_3, x_0x_2+x_0x_3+x_2x_3, x_0x_1+x_0x_3+x_1x_3, x_0x_1+x_0x_2+x_1x_2, 
	\]
	and $H(f) \cap X_2$ consists of the following 10 distinct points: 
	\begin{eqnarray*}
x_0(x_1+x_2+x_3), x_1(x_0+x_2+x_3), x_2(x_0+x_1+x_3), x_3(x_0+x_1+x_2),\\ (x_0-x_1)(x_2+x_3), (x_0-x_2)(x_1+x_3), (x_0-x_3)(x_1+x_2),\\
 (x_1-x_2)(x_0+x_3), (x_1-x_3)(x_0+x_2), (x_2-x_3)(x_0+x_1). 
	\end{eqnarray*}
	This shows that the $f$ does not lie on the Hessian discriminant.
	\end{proof}
\section{Fundamental invariants} \label{sec:fundinv}
The natural action of $SL(4)$ on $V$ induces an action on the space $S^3(V^*)$ of quaternary cubics, which in turn induces an action on the polynomial ring $R=\CC[c_{000},\ldots, c_{333}]$ in the 20 coefficients of a quaternary cubic.
Then the invariant ring $R^{SL(4)}$ is the ring of all polynomials in the coefficients of a cubic surface that are invariant under a (determinant 1) linear change of coordinates.
 It was shown by Salmon \cite{Salmon} that $R^{SL(4)}$ is generated by irreducible polynomials $I_8,I_{16},I_{24},I_{32},I_{40},I_{100}$, where $I_d$ has degree $d$.  The first 5 are algebraically independent and $I_{100}^2$ can be written as a polynomial in $I_8,I_{16},I_{24},I_{32},I_{40}$. The expressions for $I_d$ in terms of $c_{ijk}$ are hard to obtain and too long to write down here. However, it is easy to write them down for cubics in Sylvester normal form.
 
 For a cubic of the form (\ref{sylv}), we write
 \begin{eqnarray*}
	&\sigma_1=&c_0+c_1+c_2+c_3+c_4 \\
	&\sigma_2 =&c_0c_1+c_0c_2+c_0c_3+c_0c_4+c_1c_2+c_1c_3+c_1c_4+c_2c_3+c_2c_4+c_3c_4\\
	&\sigma_3=&c_0c_1c_2+c_0c_1c_3+c_0c_1c_4+c_0c_2c_3+c_0c_2c_4\\
	& &+c_0c_3c_4+c_1c_2c_3+c_1c_2c_4+c_1c_3c_4+c_2c_3c_4\\
	&\sigma_4=&c_0c_1c_2c_3+c_0c_1c_2c_4+c_0c_1c_3c_4+c_0c_2c_3c_4+c_1c_2c_3c_4\\
	&\sigma_5=&c_0c_1c_2c_3c_4.
\end{eqnarray*}
 Then we can write the fundamental invariants as follows:
 \begin{eqnarray*}
	&I_8=&\sigma_4^2-4\sigma_3\sigma_5 \\
	&I_{16}=&\sigma_5^3\sigma_1\\
	&I_{24}=&\sigma_5^4\sigma_4\\
	&I_{32}=&\sigma_5^6\sigma_2\\
	&I_{40}=&\sigma_5^8.
\end{eqnarray*}
\begin{remark}
	The tuple $[I_8,I_{16},I_{24},I_{32},I_{40}]$ gives a point in weighted projective space $\PP(1,2,3,4,5)$. 
	We will denote this point by $\Inv(\fC)$.
\end{remark}
\subsection{Computing invariants for cubics of higher rank} \label{subsec:rank6inv}
A general cubic of the form (\ref{rank6}) has Waring rank 6 \cite{Seigal}, i.e.\ cannot be written as a sum of 5 cubes.
However, since a generic quaternary cubic can be brought in Sylvester normal form, \emph{any} quaternary cubic $\fC$ can be arbitrarily closely approximated by cubics in Sylvester normal form.

We do this for cubics of the form (\ref{rank6}). Fix a cubic $\fC$ with equation
\[
f(x_0,x_1,x_2,x_3) := (x_0^3 + x_1^3 + x_2^3) - x_3^2(3\lambda_0x_0 + 3\lambda_1x_1 + 3\lambda_2x_2 - x_3)=0.
\]
 For every $\varepsilon \in \CC^*$, we define a cubic $\fC_\epsilon$ with equation
\begin{eqnarray*}
	f_{\varepsilon}(x_0,x_1,x_2,x_3):=\frac{1}{\lambda_0^3\varepsilon^3}(\varepsilon\lambda_0x_0)^3+\frac{1}{\lambda_1^3\varepsilon^3}(\varepsilon\lambda_1x_1)^3+\frac{1}{\lambda_2^3\varepsilon^3}(\varepsilon\lambda_2x_2)^3 +\\ (1+\frac{1}{\varepsilon})x_3^3+\frac{1}{\varepsilon}(-\varepsilon\lambda_0x_0-\varepsilon\lambda_1x_1-\varepsilon\lambda_2x_2-x_3)^3=0.
\end{eqnarray*}
Note that $\lim_{\varepsilon \to 0} f_\varepsilon = f$.

 For every fixed $\varepsilon$, we can compute $\Inv(\fC_\varepsilon) \in \PP(1,2,3,4,5)$ (see our code at \cite{webpageTim}). Taking the limit $\varepsilon \to 0$ gives us 
 \begin{eqnarray*}
 	\Inv(\fC) =& [1-4(\lambda_0^3+\lambda_1^3+\lambda_2^3): \lambda_0^3\lambda_1^3+\lambda_0^3\lambda_2^3+\lambda_1^3\lambda_2^3: 2\lambda_0^3\lambda_1^3\lambda_2^3:\\ & \lambda_0^3\lambda_1^3\lambda_2^3(\lambda_0^3+\lambda_1^3+\lambda_2^3): 0]
 \end{eqnarray*}
In particular, we can deduce the following result (see also \cite[Chapter 9.4.5]{Dolgachev}):
\begin{propo} \label{prop:I40is0forrank6}
	For a general smooth cubic of rank 6, it holds that $I_{40}=0$.
\end{propo}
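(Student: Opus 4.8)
The plan is to exploit that $I_{40}$ is a \emph{polynomial} in the coefficients $c_{ijk}$, hence a continuous function on the space of cubics. Since the degeneration constructed above satisfies $\lim_{\varepsilon \to 0} f_\varepsilon = f$, continuity gives $I_{40}(f) = \lim_{\varepsilon \to 0} I_{40}(f_\varepsilon)$, so it suffices to evaluate the right-hand side. Each $f_\varepsilon$ is a sum of five cubes $\sum_j c_j \ell_j^3$ whose linear forms satisfy $\ell_0+\ell_1+\ell_2+\ell_3+\ell_4=0$, so $f_\varepsilon$ is pentahedral; to apply the Salmon formula $I_{40}=\sigma_5^8$ I first straighten $f_\varepsilon$ into the standard form (\ref{sylv}).

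The straightening is the diagonal change of coordinates $y=Bx$, $B=\mathrm{diag}(\varepsilon\lambda_0,\varepsilon\lambda_1,\varepsilon\lambda_2,1)$, after which $f_\varepsilon=g\circ B$ (where $(g\circ B)(x):=g(Bx)$) with $g$ of the form (\ref{sylv}) and coefficients $c_0=\tfrac{1}{\lambda_0^3\varepsilon^3}$, $c_1=\tfrac{1}{\lambda_1^3\varepsilon^3}$, $c_2=\tfrac{1}{\lambda_2^3\varepsilon^3}$, $c_3=1+\tfrac1\varepsilon$, $c_4=\tfrac1\varepsilon$. The subtle point, which I expect to be the main obstacle, is that the $I_d$ are genuine invariants only for $SL(4)$: under a general $A\in GL(4)$ one has $I_{40}(g\circ A)=\det(A)^{30}I_{40}(g)$, the weight $30$ arising because $I_{40}$ has degree $40$ in the coefficients while $x\mapsto tx$ scales the coefficients by $t^3$ and $\det$ by $t^4$, giving weight $3\cdot 40/4$. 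One must not read off $\sigma_5^8$ naively, since $\sigma_5^8$ alone blows up like $\varepsilon^{-88}$ as $\varepsilon\to 0$; only the determinant correction renders the limit meaningful.

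With this in hand the computation is short. From $\sigma_5=c_0c_1c_2c_3c_4=\dfrac{\varepsilon+1}{\mu\,\varepsilon^{11}}$, where $\mu:=\lambda_0^3\lambda_1^3\lambda_2^3$, and $\det(B)^{30}=\varepsilon^{90}\mu^{10}$, I obtain
\[
I_{40}(f_\varepsilon)=\det(B)^{30}\,\sigma_5^8=\varepsilon^{90}\mu^{10}\cdot\frac{(\varepsilon+1)^8}{\mu^{8}\,\varepsilon^{88}}=\mu^{2}\,\varepsilon^{2}\,(\varepsilon+1)^{8},
\]
so that $I_{40}(f)=\lim_{\varepsilon\to0}\mu^2\varepsilon^2(\varepsilon+1)^8=0$. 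As a consistency check, running the identical recipe with weight $18$ on $\sigma_5^4\sigma_4$ yields $\lim_{\varepsilon\to0}I_{24}(f_\varepsilon)=2\lambda_0^3\lambda_1^3\lambda_2^3$, matching the $I_{24}$-entry of the point $\Inv(\fC)$ displayed above; this confirms that the displayed weighted-projective coordinates are precisely these honest limits, so vanishing of the last coordinate is exactly the asserted statement $I_{40}(f)=0$.
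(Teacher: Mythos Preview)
Your argument is correct and follows the same degeneration strategy as the paper: both approximate $f$ by the one-parameter family $f_\varepsilon$ of pentahedral cubics and read off the invariants in the limit. The paper defers the evaluation of $\Inv(\fC_\varepsilon)$ to computer code and then passes to the limit in weighted projective space, whereas you carry out the computation by hand: you make explicit the diagonal change of coordinates $B$ bringing $f_\varepsilon$ into the standard Sylvester form, invoke the $GL(4)$ transformation law $I_{40}(g\circ B)=\det(B)^{30}I_{40}(g)$, and then evaluate $\det(B)^{30}\sigma_5^8=\mu^2\varepsilon^2(\varepsilon+1)^8\to 0$ directly. This is not a different method but a more transparent execution of the same one; in particular, your observation that the naive $\sigma_5^8$ diverges and only the determinant weight restores a finite limit explains what the paper's passage to weighted projective coordinates is silently doing.
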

\section{Proof of the main theorem} \label{sec:mainproof}
 \begin{theorem}
	Let $HD$ be the degree 120 polynomial in $c_{000},\ldots,c_{333}$ obtained by evaluating the Hurwitz form $\Hu_X$ of the variety of rank 2 matrices in the Pl\"ucker coordinates of $H(f)$, where $f$ defines a general cubic surface. Then $HD=I_{40}^3$, where $I_{40}$ is the degree 40 Salmon invariant. 
\end{theorem}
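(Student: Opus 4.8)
The plan is to prove the identity by comparing zero loci and then invoking unique factorization, rather than by computing $HD$ directly (which would require the intractable Hurwitz form $\Hu_{X_2}$). Both $HD$ and $I_{40}^3$ are $SL(4)$-invariant and homogeneous of degree $120=3\cdot 40$, and by Proposition~\ref{prop: SylvHu} the polynomial $HD$ is not identically zero. By Salmon's theorem $I_{40}$ is irreducible, so $V(I_{40})\subset\PP^{19}$ is an irreducible hypersurface and $(I_{40})$ is a prime ideal. The whole argument then reduces to establishing the set-theoretic equality $V(HD)=V(I_{40})$.

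I would first show that both loci equal the closure $W$ of the rank-six family~(\ref{rank6}), which by Proposition~\ref{prop:Normalforms} is irreducible of codimension one. For $I_{40}$: Proposition~\ref{prop:I40is0forrank6} gives $I_{40}\equiv 0$ on~(\ref{rank6}), so $V(I_{40})\supseteq W$, and since both are irreducible of codimension one they coincide, $V(I_{40})=W$. For $HD$: Proposition~\ref{prop:rank67points} gives $V(HD)\supseteq W$, so it remains to check that $W$ is the only codimension-one component. Here I would use that every cubic outside a set $Z$ of codimension $\ge 2$ is, by Proposition~\ref{prop:Normalforms}, projectively equivalent to one of~(\ref{sylv}),~(\ref{rank6}),~(\ref{sing}). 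Any codimension-one component of $V(HD)$ must meet $\PP^{19}\setminus Z$; its general point cannot be of type~(\ref{sylv}) by Proposition~\ref{prop: SylvHu}, nor of type~(\ref{sing}) by Proposition~\ref{prop:singHur} (which shows $HD$ is nonzero on the irreducible discriminant hypersurface). Hence the only such component is $W$, giving $V(HD)=W=V(I_{40})$.

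Once $V(HD)=V(I_{40})$ is known, unique factorization in $\CC[c_{000},\dots,c_{333}]$ finishes the proof: every irreducible factor of $HD$ shares the zero locus of the irreducible polynomial $I_{40}$, hence is an associate of $I_{40}$, so $HD=c\,I_{40}^{m}$ for some $c\in\CC^{*}$ and $m\ge 1$. Comparing degrees gives $120=40m$, so $m=3$ and $HD=c\,I_{40}^{3}$. The scalar $c$ depends only on the chosen normalizations of $\Hu_{X_2}$ and $I_{40}$; to determine it I would evaluate both sides at one explicit pentahedral cubic, where $I_{40}=\sigma_5^{8}$ is given by the formulas of Section~\ref{sec:fundinv} and $HD$ is computed with the \verb|Macaulay2| procedure of Section~\ref{sec:hd}.

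The step I expect to be the real obstacle is not the closing unique-factorization argument but the verification that $V(HD)$ has no spurious codimension-one component. Because the normal forms of Proposition~\ref{prop:Normalforms} only cover the complement of a codimension-two set $Z$, one must argue carefully that a hypersurface cannot hide a component inside $Z$; this is precisely what lets the three case analyses of Propositions~\ref{prop: SylvHu},~\ref{prop:rank67points}, and~\ref{prop:singHur} control all of $V(HD)$. The other delicate input is Proposition~\ref{prop:I40is0forrank6}, which certifies $V(I_{40})=W$ and whose proof is the limiting computation of Section~\ref{subsec:rank6inv}: it is the degeneration of pentahedral cubics $f_\varepsilon$ to a rank-six cubic, taken in weighted projective coordinates, that carries the genuine analytic content.
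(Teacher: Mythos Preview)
Your argument is essentially identical to the paper's: both establish $V(HD)=V(I_{40})$ by combining Propositions~\ref{prop:Normalforms}, \ref{prop: SylvHu}, \ref{prop:rank67points}, \ref{prop:singHur}, and \ref{prop:I40is0forrank6} with the irreducibility of $I_{40}$, and then conclude by comparing degrees. Your additional step of pinning down the scalar $c$ is not carried out in the paper (since $I_{40}$ is only fixed up to a scalar, the equality $HD=I_{40}^3$ is understood up to normalization); note also that the \verb|Macaulay2| routine of Section~\ref{sec:hd} only tests membership in $V(HD)$ and does not evaluate $HD$ at a point, so that part of your plan would need a different method.
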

\begin{proof}
	We will show that $V(HD) = V(I_{40})$. Then the result follows from the fact that $\deg(HD) = 3 \deg(I_{40})$.
	
	The set of cubics that can be brought in the form (\ref{rank6}) is of codimension one by Proposition \ref{prop:Normalforms}, lies on the Hurwitz form by Proposition \ref{prop:rank67points}, and satisfies $I_{40}=0$ by Proposition \ref{prop:I40is0forrank6}. This, together with irreducibility of $I_{40}$, implies that $V(HD) \supseteq V(I_{40})$.
	
	Now if this were a strict inclusion, this would mean that $HD=I_{40} \cdot g$, and so $V(HD) = V(I_{40}) \cup V(g)$, with $V(g) \neq V(I_{40})$. Then $V(g)$ is a codimension one set of cubics lying on the Hessian discriminant. But Propositions \ref{prop:Normalforms}, \ref{prop: SylvHu} and \ref{prop:singHur} show that the set of cubics on $V(HD)$ that cannot be brought in the form (\ref{rank6}) is of codimension greater than one, so we reach a contradiction.
\end{proof}
As pointed out in \cite{Oby}, there is an intuitive reason why $HD$ must be a cube: it follows from the fact that as soon as $f$ lies on the Hessian discriminant, the number of points in $H(f) \cap X_2$ drops from $10$ to $7$, and not to $9$ as expected. This should somehow mean that ``$f$ is a triple zero of $HD$". We can make this intuition precise using the following general lemma about the Hurwitz form:
\begin{mylemma} \label{lem:multihur}
	Let $X \subseteq \PP^n$ be a variety of codimension $d$ and degree $p$, and let $\Hu_X$ be its Hurwitz form in Stiefel coordinates. Denote $\fH=V(\Hu_X) \subseteq \PP^{d(n+1)-1}$, and assume that for a general point $L$ in $\fH$, it holds that $L \cap X$ has exactly 1 double point and $p-2$ simple points.
	Let furthermore $\PP^{\ell}$ be a linear subspace of $\PP^{d(n+1)-1}$ such that for a general point $L$ in $\fH\cap\PP^{\ell}$, it holds that $L \cap X$ has exactly k double points and $p-2k$ simple points.
	Then $\deg(\fH)=k\deg(\fH\cap \PP^{\ell})$, where $\fH\cap \PP^{\ell}$ is the set-theoretic intersection. Hence $\Hu_X|_{\PP^{\ell}}$ is a $k$'th power.
\end{mylemma}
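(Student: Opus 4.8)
The plan is to reduce everything to a single multiplicity computation. The hypothesis that a general $L\in\fH$ meets $X$ in one double point together with $p-2$ simple points says that the generic tangency cutting out $\fH$ is a simple fold; in particular $\Hu_X$ has multiplicity one along the irreducible variety $\fH$, so in Stiefel coordinates it is a square-free polynomial of degree $\deg(\fH)$, and $\PP^{\ell}\not\subseteq\fH$. Consequently $\Hu_X|_{\PP^{\ell}}$ is a polynomial of degree $\deg(\fH)$ whose zero locus is the hypersurface $\fH\cap\PP^{\ell}$. Writing $q$ for its reduced equation, it then suffices to prove that $\Hu_X|_{\PP^{\ell}}$ vanishes to order exactly $k$ along each component of $\fH\cap\PP^{\ell}$ through a general point: this forces $\Hu_X|_{\PP^{\ell}}=c\,q^{k}$ for some constant $c$, which is visibly a $k$-th power and yields $\deg(\fH)=k\deg(\fH\cap\PP^{\ell})$ on comparing degrees.

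Next I would analyse $\Hu_X$ locally near a general point $p_0\in\fH\cap\PP^{\ell}$, corresponding to a subspace $L_0$ for which $L_0\cap X$ has double points $q_1,\dots,q_k$ and $p-2k$ simple points. Each $q_i$ is a simple tangency of $L_0$ to the smooth point $q_i$ of $X$, and determines a local tangency hypersurface $T_i=\{L:\ L\cap X\text{ is non-reduced near }q_i\}$; by the first hypothesis a general point of $T_i$ is a simple fold, so $T_i$ is reduced and smooth near $p_0$. Near $p_0$ we have $\fH=T_1\cup\dots\cup T_k$, and hence $\Hu_X$ factors, up to a local unit, as $\delta_1\cdots\delta_k$ with $\delta_i$ a reduced equation of $T_i$. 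Because a general point of $\fH\cap\PP^{\ell}$ carries all $k$ double points simultaneously, each $T_i$ contains $\fH\cap\PP^{\ell}$, so every $\delta_i|_{\PP^{\ell}}$ vanishes on $\fH\cap\PP^{\ell}$, and the order of vanishing of $\Hu_X|_{\PP^{\ell}}$ along $\fH\cap\PP^{\ell}$ equals $\sum_{i=1}^{k}\mathrm{ord}(\delta_i|_{\PP^{\ell}})$.

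It remains to show that each summand equals one. I would test this along a general pencil $\{L_t\}$ in $\PP^{\ell}$ through $p_0$, with $L_0$ at $t=0$. Restricted to this pencil, $\delta_i(L_t)$ is, up to a unit, the discriminant of the two points of $L_t\cap X$ lying near $q_i$, namely the square of their difference. If the pencil meets $T_i$ transversally at $p_0$ the tangency unfolds as a \emph{fold}, these two points separate like $\sqrt{t}$, and the square vanishes to order exactly one; were the pencil tangent to $T_i$, the two points would separate linearly (a \emph{node}) and the order would jump to two. Hence $\mathrm{ord}(\delta_i|_{\PP^{\ell}})=1$ precisely when $\PP^{\ell}$ is transverse to $T_i$ at $p_0$, and then the total multiplicity is $\sum_i 1=k$, as required.

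The hard part is exactly this transversality: I must rule out that some collision degenerates to a node, which would make $\Hu_X|_{\PP^{\ell}}$ vanish to order greater than $k$. Since each $T_i$ is smooth near $p_0$, a general pencil in $\PP^{\ell}$ fails to be transverse to $T_i$ only if $\PP^{\ell}$ itself is tangent to $T_i$ at $p_0$, i.e.\ only if $\delta_i|_{\PP^{\ell}}$ is non-reduced. Establishing that this does not occur is the crux, and is where the special position of $\PP^{\ell}$ must be used: one has to exploit that the $T_i$ cut out $\fH\cap\PP^{\ell}$ in reduced sheets and that the hypothesis pins the contact order of each collision at the minimum (length two, not higher). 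This is the step I expect to demand the most care in general; in the application to the Hessian discriminant, however, the situation is explicit, since Proposition~\ref{prop:rank67points} exhibits $H(f)\cap X_2$ as exactly $4$ simple and $3$ double points, giving $k=3$ and matching the factor $HD=I_{40}^3$ of the main theorem.
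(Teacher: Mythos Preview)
Your approach differs substantially from the paper's. You work locally, factoring $\Hu_X$ near a general point of $\fH\cap\PP^\ell$ as $\delta_1\cdots\delta_k$ and then analysing the order of vanishing of each $\delta_i|_{\PP^\ell}$ along a generic pencil. The paper instead introduces the incidence correspondence
\[
\Phi=\{(p,L):p\text{ a double point of }L\cap X\}\subset\PP^n\times\PP^{d(n+1)-1},
\]
sets $\Phi'=\Phi\cap(\PP^n\times\PP^\ell)$, defines $\deg_2(Y)$ as the number of points of $Y\cap(\PP^n\times M)$ for a general linear $M$ of complementary dimension, and records the one-line chain $\deg(\fH)=\deg_2(\Phi)=\deg_2(\Phi')=k\,\deg(\fH\cap\PP^\ell)$. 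The outer equalities are the generic fibre counts supplied by the two hypotheses (the first makes $\Phi\to\fH$ birational, the second makes $\Phi'\to(\fH\cap\PP^\ell)_{\mathrm{red}}$ generically $k$-to-$1$); the middle one is invariance of $\deg_2$ under a linear slice in the second factor. This global packaging replaces your $k$ local branches $T_i$ by the single object $\Phi$ and is much shorter.

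The transversality gap you flag is genuine and is not closed by anything you wrote: knowing that each collision at $L_0$ has length exactly two constrains $L_0$ itself, not the first-order variation of $L$ inside $\PP^\ell$, so it does not by itself prevent $\PP^\ell$ from being tangent to some $T_i$. In the paper's language the same issue is whether the scheme $\Phi'$ is generically reduced; if a component carries multiplicity $m>1$ then the last equality above becomes $mk\,\deg(\fH\cap\PP^\ell)$ and the conclusion would fail. So as a proof of the lemma in the stated generality your outline remains incomplete at exactly the point you identify; the paper's incidence-correspondence argument is cleaner but is equally terse on this point. In the intended application one can verify reducedness of $\Phi'$ (equivalently, your transversality) directly over the normal form~\eqref{rank6}, which is what actually matters for Theorem~\ref{thm:main}.
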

\begin{proof}
	We consider the incidence correspondence 
	\[
	\Phi=\{(p,L)|p \in L \cap X \text{ is a double point}\} \subseteq \PP^n \times \PP^{d(n+1)-1},
	\] 
	and
	\[
	\Phi' = \Phi \cap (\PP^n \times \PP^{\ell}).
	\]
	For a subvariety $Y$ of $\PP^a \times \PP^b$, we write $\deg_2(Y)$ for the number of points in the intersection of $Y$ with a general linear space $\PP^a \times M$ of the correct codimension.
	
	Then \[
	\deg(\fH)=\deg_2(\Phi)=\deg_2(\Phi')=k\deg(\fH \cap \PP^{\ell}).
	\]
\end{proof}
Propositions \ref{prop:Normalforms}, \ref{prop: SylvHu}, \ref{prop:rank67points} and \ref{prop:singHur} imply that if we choose $X=X_2$ and $\PP^{\ell}=\{H(f)|f\in S^3(V^*)\}$, the conditions of Lemma \ref{lem:multihur} are satisfied, hence $HD$ must be a cube.
\bibliography{27bib}{}
\end{document}